\newtheorem{theo+}              {Theorem}           [section]
\newtheorem{prop+}  [theo+]     {Proposition}
\newtheorem{coro+}  [theo+]     {Corollary}
\newtheorem{lemm+}  [theo+]     {Lemma}
\newtheorem{exam+}  [theo+]     {Example}
\newtheorem{rema+}  [theo+]     {Remark}
\newtheorem{defi+}  [theo+]     {Definition}
\def \r{\mbox{${\mathbb R}$}}
\newenvironment{theorem}{\begin{theo+}}{\end{theo+}}
\newenvironment{proposition}{\begin{prop+}}{\end{prop+}}
\newenvironment{corollary}{\begin{coro+}}{\end{coro+}}
\newenvironment{lemma}{\begin{lemm+}}{\end{lemm+}}
\theoremstyle{plain} \theoremstyle{remark}
\newtheorem{remark}{Remark}
\newtheorem{example}{Example}
\def\E{/\kern-1.0em \equiv }
\author{Ze-Ping Wang }
\author{Ye-Lin Ou$^{*}$ }
\author{Han-Chun Yang$^{**}$ }
\address{Department of Mathematics, \newline\indent Yunnan University,\newline\indent 
Kunming 650091, P. R. China
\newline\indent E-mail:zeping.wang@gmail.com \;(Wang)\\\newline\indent E-mail: hyang@ynu.edu.cn\; (H. Yang)\\\newline\indent
\newline\indent Department of
Mathematics,\newline\indent Texas A $\&$ M
University-Commerce,\newline\indent Commerce, TX 75429
USA.\newline\indent E-mail:yelin.ou@tamuc.edu\; (Ou)}
\thanks{*Research supported by NSF of Guangxi (P. R. China), 2011GXNSFA018127.\\
\indent **Research supported by the NSF of China (11361073)}
\begin{document}
\title[Biharmonic maps from a 2-sphere]{Biharmonic maps from a 2-sphere}
\subjclass{58E20, 53C12} \keywords{Biharmonic maps, warped product metrics, 2-spheres, rotationally symmetric map, rotationally symmetric manifolds.}
\date{12/08/2013}
\maketitle
\section*{Abstract}
\begin{quote}
{\footnotesize Motivated by the rich theory of harmonic maps from a $2$-sphere, we study biharmonic maps from a $2$-sphere in this paper. We first derive biharmonic equation for rotationally symmetric maps between rotationally symmetric $2$-manifolds. We then apply the equation to obtain a classification of biharmonic maps in a family of rotationally symmetric maps between $2$-spheres. We also find many examples of proper biharmonic maps defined locally on a $2$-sphere. Our results seem to suggest that any biharmonic map $S^2\longrightarrow (N^n, h)$ be a weakly conformal immersion.}
\end{quote}
\section{Introduction}
In this paper, we work on the category of smooth objects, so all manifolds, tensor fields, and maps, etc. are assumed to be smooth.\\

{\em A harmonic map} is a map $\phi: (M,g)\longrightarrow (N,h)$ between Riemannian manifolds that is a critical point of the energy functional defined by
\begin{equation}\notag
E(\phi)=\frac{1}{2}\int_\Omega |{\rm d}\phi|^2v_g,
\end{equation}
where $\Omega$ is a compact domain of $ M$. The Euler-Lagrange
equation of the energy functional gives the harmonic map equation ( \cite{ES})
\begin{equation}\label{fhm}
\tau(\phi) \equiv {\rm Tr}_g\nabla\,d \phi=0,
\end{equation}
where $\tau(\phi)$ is called the tension field of
the map $\phi$.\\

{\em A biharmonic map} is a map $\phi: (M,g)\longrightarrow (N,h)$ between Riemannian manifolds that is a critical point of the bienergy functional defined by 
\begin{equation}\notag
E_{2}(\phi)=\frac{1}{2}\int_\Omega |\tau(\phi)|^2v_g,
\end{equation}
where $\Omega$ is a compact domain of $ M$. The
Euler-Lagrange equation of this functional gives the biharmonic map
equation (\cite{Ji1})
\begin{equation}\label{BTF}
\tau_{2}(\phi):={\rm
Trace}_{g}(\nabla^{\phi}\nabla^{\phi}-\nabla^{\phi}_{\nabla^{M}})\tau(\phi)
- {\rm Trace}_{g} R^{N}({\rm d}\phi, \tau(\phi)){\rm d}\phi =0,
\end{equation}
where $R^{N}$ denotes the curvature operator of $(N, h)$ defined by
$$R^{N}(X,Y)Z=
[\nabla^{N}_{X},\nabla^{N}_{Y}]Z-\nabla^{N}_{[X,Y]}Z.$$

Clearly, any harmonic map ($\tau(\phi)\equiv 0$) is always a biharmonic map. We call a biharmonic map that is not harmonic a {\em proper biharmonic map}. \\

The study of biharmonic maps (as a special case of k-polyharmonic maps with $k=2$) was proposed by Eells-Lemaire in \cite{EL} (Section (8.7)). Jiang \cite{Ji1}, \cite{Ji2}, \cite{Ji3} made a first effort to study such maps by calculating the first and second variational formulas of the bienergy functional and specializing on the biharmonic isometric immersions which nowadays are called biharmonic submanifolds. Very interestingly, the notion of biharmonic submanifolds was also introduced by B. Y. Chen \cite{Ch} in a different way in his study of the finite type submanifolds in Euclidean spaces. Since 2000, the study of biharmonic maps has been attracting a growing attention and it has become an active area of research with many progresses. We refer the readers to \cite{BK}, \cite{BFO2}, \cite{BMO2}, \cite{LOn1}, \cite{MO}, \cite{NUG}, \cite{Ou1}, \cite{Ou4}, \cite{OL}, \cite{Oua}, and the references therein for some recent geometric study of general biharmonic maps. For some recent progress on biharmonic submanifolds see \cite{CI}, \cite{Ji2}, \cite{Ji3}, \cite{Di}, \cite{CMO1}, \cite{CMO2}, \cite{BMO1}, \cite{BMO3}, \cite{Ou3}, \cite{OT}, \cite{OW}, \cite{NU}, \cite{TO}, \cite{CM}, \cite{AGR} and the references therein. For biharmonic conformal immersions and submersions see \cite{Ou2}, \cite{Ou5}, \cite{BFO1}, \cite{LO}, \cite{WO} and the references therein.\\

In this paper, we study biharmonic maps from a $2$-sphere. One of our motivations comes from the following observations. There are many interesting examples and a rich theory of harmonic maps from a $2$-sphere:
\begin{itemize}
\item Chern-Goldberg \cite{CG}: Any harmonic immersion $f:S^2\longrightarrow (N^n,h)$ has to be minimal, or equivalently, a conformal immersion;
\item Sacks-Uhlenbeck \cite{SU} and Wood \cite{Wo1}: Any harmonic map $f:S^2\longrightarrow (N^n,h)$ with $n\ge 3$ has to be a conformal branched minimal immersion;
\item Smith \cite{Sm}: Any homotopy class of maps $S^2\longrightarrow S^2$ has a harmonic map representative;
\item There exist harmonic embeddings of $S^2$ into $S^3$, equipped with arbitrary metric (\cite{SM1});
\item There are many beautiful explicit constructions that can be used to produce harmonic maps from $S^2$ into projective spaces, Grassmannian manifolds, and Lie groups (See e.g., Uhlenbeck \cite{Uh}, Burstall-Salamon \cite{BS}, Burstall-Wood \cite{BW} and Wood \cite{Wo2}, \cite{Wo3});
\item Fern$\acute{\rm a}$ndez \cite{Fe}:  The dimension of the space of harmonic maps from the $2$-sphere to the $2n$-sphere is $2d + n^2$. There is an explicit algebraic method to construct all harmonic maps from the $2$-sphere to the $n$-sphere.
\end{itemize} 

It would be interesting to know if any of the above results can be generalized to the case of proper biharmonic maps.
Knowing that the only known example of proper biharmonic maps from $S^2$ is the biharmonic isometric
immersion $S^2(\frac{1}{\sqrt{2}})\longrightarrow S^3$ \cite{CMO1} (or a composition of this with a totally geodesic maps from $S^3$ into another manifold (See, e.g., \cite{Ou1})) we would especially like to know the answer to the question: {\em Does there exist a proper biharmonic map $\varphi:S^2\longrightarrow (N^n,h)$ that is NOT a conformal immersion?}\\

In this paper, we study biharmonicity of rotationally symmetric maps from $S^2$. We obtain a classification of biharmonic maps in a family of rotationally symmetric  maps between $2$-spheres. We are able to find many examples of locally defined proper biharmonic maps from $S^2$. Very interestingly, we find that none of these locally defined proper biharmonic maps allows an extension to a biharmonic map defined globally on $S^2$. Our results seem to suggest the following \\

{\em Conjecture:} any biharmonic map $\varphi: S^2\longrightarrow (N^n,h)$ is a weakly conformal immersion.
\section{Biharmonic equations for rotationally symmetric maps}
In this section, we will derive biharmonic equations for a large class of maps that includes rotationally symmetric maps between rotationally symmetric manifolds. We will need the following lemma that gives the equation of biharmonic maps in local coordinates.
\begin{lemma}\label{Owww}\cite{OL}
Let $\phi :(M^{m}, g)\longrightarrow (N^{n}, h)$ be a map between
Riemannian manifolds with $\phi (x^{1},\ldots, x^{m})=(\phi^{1}(x),
\ldots, \phi^{n}(x))$ with respect to local coordinates $(x^{i})$ in
$M$ and $(y^{\alpha})$ in $N$. Then, $\phi$ is biharmonic if and
only if it is a solution of the following system of PDE's
\begin{eqnarray}\notag\label{BI3}
&& \Delta\tau^{\sigma} +2g(\nabla\tau^{\alpha},
\nabla\phi^{\beta}) {\bar \Gamma_{\alpha\beta}^{\sigma}}
+\tau^{\alpha}\Delta \phi ^{\beta}{\bar
\Gamma_{\alpha\beta}^{\sigma}}\\ && +\tau^{\alpha}
g(\nabla\phi^{\beta}, \nabla\phi^{\rho})(\partial_{\rho}{\bar
\Gamma_{\alpha\beta}^{\sigma}}+{\bar
\Gamma_{\alpha\beta}^{\nu}}{\bar \Gamma_{\nu\rho}^{\sigma}})
-\tau^{\nu}g(\nabla\phi^{\alpha}, \nabla\phi^{\beta}){\bar
R}_{\beta\,\alpha \nu}^{\sigma}=0,\\\notag && \sigma=1,\, 2,\,
\ldots, n,
\end{eqnarray}
where $\tau^1, \ldots, \tau^n$ are components of the tension field
of the map $\phi$, $\nabla,\;\Delta$ denote the gradient and the
Laplace operators defined by the metric $g$, and
${\bar\Gamma_{\alpha\beta}^{\sigma}}$ and ${\bar R}_{\beta\,\alpha
\nu}^{\sigma}$ are the components of the connection and the
curvature of the target manifold.
\end{lemma}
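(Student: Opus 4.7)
The plan is to expand the coordinate-free expression $\tau_2(\phi)={\rm Trace}_g(\nabla^\phi\nabla^\phi-\nabla^\phi_{\nabla^M})\tau(\phi)-{\rm Trace}_g R^N(d\phi,\tau(\phi))d\phi$ directly in local coordinates. Fix a point $p\in M$ and work in normal coordinates $(x^i)$ centered at $p$, so that $\nabla^M_{\partial_i}\partial_j$ vanishes at $p$; this kills the $\nabla^\phi_{\nabla^M}$ correction at $p$ and reduces the rough Laplacian there to $g^{ij}\nabla^\phi_{\partial_i}\nabla^\phi_{\partial_j}$. Because the resulting identity is tensorial, establishing it at $p$ in these coordinates will imply it holds in any coordinates at any point.

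The key computational tool is the pullback connection formula
\begin{equation*}
\nabla^\phi_{\partial_j}\bigl(V^\alpha(\partial_\alpha\circ\phi)\bigr)=\bigl(\partial_j V^\alpha+V^\beta(\partial_j\phi^\gamma)\bar\Gamma^\alpha_{\gamma\beta}\bigr)(\partial_\alpha\circ\phi).
\end{equation*}
Applying it first with $V^\alpha=\tau^\alpha$ and then iterating with $\nabla^\phi_{\partial_i}$, I would use the Leibniz rule and the chain rule $\partial_i\bar\Gamma^\alpha_{\gamma\beta}(\phi(x))=(\partial_i\phi^\rho)\partial_\rho\bar\Gamma^\alpha_{\gamma\beta}$ to distribute $\partial_i$ across the resulting product. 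Tracing against $g^{ij}$ at $p$ and identifying natural quantities yields five scalar contributions: the scalar Laplacian $g^{ij}\partial_i\partial_j\tau^\sigma=\Delta\tau^\sigma$; two equal mixed terms combining into $2g(\nabla\tau^\alpha,\nabla\phi^\beta)\bar\Gamma^\sigma_{\alpha\beta}$ after using $\bar\Gamma^\sigma_{\alpha\beta}=\bar\Gamma^\sigma_{\beta\alpha}$; the term $\tau^\alpha\Delta\phi^\beta\bar\Gamma^\sigma_{\alpha\beta}$ where the second derivative lands on a component of $\phi$; a term $\tau^\alpha g(\nabla\phi^\beta,\nabla\phi^\rho)\partial_\rho\bar\Gamma^\sigma_{\alpha\beta}$ from the chain-rule differentiation of the Christoffel; and finally the quadratic-Christoffel term $\tau^\alpha g(\nabla\phi^\beta,\nabla\phi^\rho)\bar\Gamma^\nu_{\alpha\beta}\bar\Gamma^\sigma_{\nu\rho}$ coming from iterating the pullback connection.

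For the curvature contribution I would expand in coordinates using $R^N(\partial_\beta,\partial_\nu)\partial_\alpha=\bar R^\sigma_{\alpha\beta\nu}\partial_\sigma$, which gives
\begin{equation*}
g^{ij}R^N(d\phi(\partial_i),\tau(\phi))d\phi(\partial_j)=g(\nabla\phi^\alpha,\nabla\phi^\beta)\,\tau^\nu\,\bar R^\sigma_{\beta\alpha\nu}\,\partial_\sigma,
\end{equation*}
where the symmetry of $g$ and a single relabeling of dummy indices produce the exact arrangement appearing in the statement. Subtracting this from the sum of the five rough-Laplacian contributions and reading off the coefficient of each $\partial_\sigma\circ\phi$ yields the stated PDE system for $\sigma=1,\ldots,n$.

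The main obstacle is purely bookkeeping: keeping the index placements of the Christoffel symbols $\bar\Gamma^\sigma_{\alpha\beta}$ and curvature components $\bar R^\sigma_{\beta\alpha\nu}$ consistent with the conventions used in the lemma, and noticing that the two cross-derivative terms (one from $\partial_i$ hitting $\tau^\beta$ after the first application, one from the symmetric role of $\partial_i$ and $\partial_j$ in the trace) are equal rather than independent, which is responsible for the coefficient $2$ in the second term. There is no analytic or geometric subtlety beyond this; the lemma is simply the direct coordinate expansion of the coordinate-free biharmonic map equation.
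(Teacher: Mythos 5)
Your proposal is correct: expanding ${\rm Trace}_g(\nabla^\phi\nabla^\phi-\nabla^\phi_{\nabla^M})\tau(\phi)$ by iterating the pullback-connection formula, tracing in normal coordinates at a point (which is legitimate here since every term in the target identity is built from the coordinate-invariant operators $\Delta$, $\nabla$ and $g$ acting on the scalar components $\tau^\alpha$, $\phi^\beta$), and adding the coordinate expansion of the curvature term is exactly the standard derivation of this formula. Note that the paper itself gives no proof of this lemma --- it is quoted from the reference [OL] --- and your computation reproduces the argument given there, including the correct accounting for the factor $2$ from the two symmetric cross terms and the quadratic Christoffel term from the iterated connection.
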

\begin{lemma}\label{PLast}
The map $\varphi:(M^2,dr^2+\sigma^2(r)d\theta^2)\longrightarrow
(N^2,d\rho^2+\lambda^2(\rho)d\phi^2)$ with $\varphi(r,\theta)=(\rho(r), cr+k\theta+a_{2})$
is biharmonic if and only if it solves the system
\begin{equation}\label{Sl1}
\begin{cases}
x''+\frac{\sigma'}{\sigma}x'-(c^2+\frac{\kappa^2}{\sigma^2})(\lambda\lambda'(\rho))'(\rho)x-(2cy'+y^2)\lambda\lambda'(\rho)
=0,\\
cy''+yy'+\frac{2c^2\lambda'(\rho)}{\lambda}x'+
2c^2\left(\frac{\sigma'\lambda'(\rho)}{\sigma\lambda}
+\frac{\rho'(\lambda\lambda'(\rho))'(\rho)}{\lambda^2}\right)x=0,\\
x=\tau^1=\rho''+\frac{\sigma'}{\sigma}\rho'-(c^2+\frac{\kappa^2}{\sigma^2})\lambda\lambda'(\rho),\\
y=\tau^2=\frac{2c\rho'\lambda'(\rho)}{\lambda}+\frac{c\sigma'}{\sigma},
\end{cases}
\end{equation}
\end{lemma}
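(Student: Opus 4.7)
The plan is to apply Lemma \ref{Owww} in local coordinates $(x^1,x^2)=(r,\theta)$ on the source and $(y^1,y^2)=(\rho,\phi)$ on the target, and then unwind the resulting PDE system into two ODEs in $r$. First one sets up the target geometry: a direct calculation shows that the only nonvanishing Christoffel symbols of $(N^2,d\rho^2+\lambda^2d\phi^2)$ are $\bar\Gamma^1_{22}=-\lambda\lambda'$ and $\bar\Gamma^2_{12}=\bar\Gamma^2_{21}=\lambda'/\lambda$, and that its Gauss curvature is $K=-\lambda''/\lambda$, which pins down all components $\bar R^\sigma_{\beta\alpha\nu}$. Because the source Laplacian applied to any function of $r$ alone equals $f''+(\sigma'/\sigma)f'$, substituting $\phi^1=\rho(r)$ and $\phi^2=cr+k\theta+a_2$ into $\tau^\sigma=\Delta\phi^\sigma+g^{ij}\bar\Gamma^\sigma_{\alpha\beta}\partial_i\phi^\alpha\partial_j\phi^\beta$ immediately reproduces the stated expressions for $x=\tau^1$ and $y=\tau^2$, both depending only on $r$.

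Next I would substitute these data into the biharmonic equation of Lemma \ref{Owww} for $\sigma=1$ and $\sigma=2$. Since everything depends only on $r$, the Laplacians reduce to $\Delta x=x''+(\sigma'/\sigma)x'$ and $\Delta y=y''+(\sigma'/\sigma)y'$, while the pairings $g(\nabla\tau^\alpha,\nabla\phi^\beta)$ and $g(\nabla\phi^\alpha,\nabla\phi^\beta)$ reduce to simple products involving $c,k,\rho',\sigma',\lambda,\lambda'$. For each $\sigma$ the expression $\partial_\rho\bar\Gamma^\sigma_{\alpha\beta}+\bar\Gamma^\nu_{\alpha\beta}\bar\Gamma^\sigma_{\nu\rho}$ partially cancels against the curvature term, producing the compact coefficients $(\lambda\lambda')'=\lambda'^2+\lambda\lambda''$ (for $\sigma=1$) and $\lambda''/\lambda$ (for $\sigma=2$). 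The terms of the $\sigma=1$ equation then regroup, after collecting $x$-linear pieces into $-(c^2+k^2/\sigma^2)(\lambda\lambda')'x$, into the explicit $-2cy'\lambda\lambda'$ plus a remainder of the form $-cy\lambda'\bigl[\sigma'\lambda/\sigma+2\rho'\lambda'\bigr]$, which is exactly $-y^2\lambda\lambda'$ by the defining formula for $y$.

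For the $\sigma=2$ equation I would multiply through by $c$, then use the fact that $c(\sigma'/\sigma)y'+2c\rho'\lambda'y'/\lambda$ telescopes to $yy'$ since $y=c\sigma'/\sigma+2c\rho'\lambda'/\lambda$, and finally substitute $\rho''=x-(\sigma'/\sigma)\rho'+(c^2+k^2/\sigma^2)\lambda\lambda'$ (from the defining formula for $x$) to remove the $\rho''$ that appears through $\Delta\phi^1$. This substitution produces both a term proportional to $xy\lambda'/\lambda$ that cancels an analogous contribution coming from the $x\sigma'\lambda'/(\sigma\lambda)$ and $x\rho'\lambda'^2/\lambda^2$ pieces, and it simultaneously generates the desired coefficient $2c^2\bigl(\sigma'\lambda'/(\sigma\lambda)+\rho'(\lambda\lambda')'/\lambda^2\bigr)x$ once one writes $(\lambda\lambda')'=\lambda'^2+\lambda\lambda''$. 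The main obstacle is not conceptual but combinatorial bookkeeping: the PDE of Lemma \ref{Owww} has many summation indices, several nonzero Christoffel and curvature components contribute at once, and the matching of the final form depends on recognizing at precisely the right moments that $c\sigma'/\sigma+2c\rho'\lambda'/\lambda$ is $y$ and $\rho''+(\sigma'/\sigma)\rho'-(c^2+k^2/\sigma^2)\lambda\lambda'$ is $x$.
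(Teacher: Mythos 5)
Your proposal is correct and follows essentially the same route as the paper: compute the Christoffel symbols and curvature of the target warped metric, write down $\tau^1,\tau^2$, substitute into the coordinate biharmonic equation of Lemma \ref{Owww}, and regroup the resulting $r$-dependent ODEs by recognizing the combinations $y=c\sigma'/\sigma+2c\rho'\lambda'/\lambda$ and $x=\rho''+(\sigma'/\sigma)\rho'-(c^2+k^2/\sigma^2)\lambda\lambda'$ (in particular $-c(\sigma'\lambda\lambda'/\sigma+2\rho'\lambda'^2)y=-y^2\lambda\lambda'$, and multiplying the second equation by $c$ to telescope $yy'$), which is exactly the paper's final reduction. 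The only quibble is wording: the $xy\lambda'/\lambda$ term produced by eliminating $\rho''$ does not cancel the other $x$-linear pieces but combines with them to yield the coefficient $2c^2\bigl(\sigma'\lambda'/(\sigma\lambda)+\rho'(\lambda\lambda')'/\lambda^2\bigr)$.
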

\begin{proof}
One can easily compute the connection coefficients of the domain and
the target surfaces to get
\begin{eqnarray}\notag
&& \Gamma^1_{11}=0, \hskip0.3cm\Gamma^1_{12}=0, \hskip0.3cm
\Gamma^1_{22}=-\sigma\sigma',\;\;\; \Gamma^2_{11}=0, \hskip0.3cm
\Gamma^2_{12}=\frac{\sigma'}{\sigma}, \hskip0.3cm \Gamma^2_{22}=0,\\\notag
&&\bar{\Gamma}^1_{11}=0, \hskip0.3cm\bar{\Gamma}^1_{12}=0,
\hskip0.3cm \bar{\Gamma}^1_{22}=-\lambda\lambda'(\rho),\;\;\;
\bar{\Gamma}^2_{11}=0, \hskip0.3cm \bar{\Gamma}^2_{12}=\frac{\lambda'(\rho)}{\lambda},
\hskip0.3cm \bar{\Gamma}^2_{22}=0.
\end{eqnarray}
We can also check that the components of the Riemannian curvature of
the target surface are given by
\begin{eqnarray}\notag
&&\bar{R}^1_{212}=-\lambda\lambda''(\rho),\; \bar{R}^1_{221}=\lambda\lambda''(\rho),\;
\bar{R}^2_{112}=\frac{\lambda''(\rho)}{\lambda}, \;\bar{R}^2_{121}=-\frac{\lambda''(\rho)}{\lambda}\\\notag && {\rm
others}\; \quad \bar{R}^l_{kij}=0,
\end{eqnarray}
and that the tension field of the map $\varphi$ has components
\begin{eqnarray}\label{Sl2}
\tau^1 &=& g^{ij}(\varphi^{1}_{ij}-\Gamma^k_{ij}\varphi^{1}_{k}+\bar{\Gamma}^1_{\alpha\beta}\varphi^{\alpha}_{i}\varphi^{\beta}_{j})
=\rho''+\frac{\sigma'}{\sigma}\rho'-(c^2+\frac{k^2}{\sigma^2})\lambda\lambda'(\rho),\\
\tau^2&=&g^{ij}(\varphi^{2}_{ij}-\Gamma^k_{ij}\varphi^{2}_{k}+\bar{\Gamma}^2_{\alpha\beta}\varphi^{\alpha}_{i}\varphi^{\beta}_{j})
=\frac{2c\rho'\lambda'(\rho)}{\lambda}+\frac{c\sigma'}{\sigma}.
\end{eqnarray}
Using the notations $x=\tau^1$ and $y=\tau^2$
and performing a further computation we have
\begin{eqnarray}\label{121}
&& \Delta\tau^1=g^{ij}(\tau^1_{ij}-\Gamma^k_{ij}\tau^1_k)=x''+\frac{\sigma'}{\sigma}x',\\
&& \Delta\tau^2=g^{ij}(\tau^2_{ij}-\Gamma^k_{ij}\tau^2_k)=y''+\frac{\sigma'}{\sigma}y',\\
&& 2g(\nabla\tau^{\alpha},\nabla\varphi^{\beta})\bar{\Gamma}^1_{\alpha\beta}=-2c\lambda\lambda'(\rho)y',\\
&& 2g(\nabla\tau^{\alpha},\nabla\varphi^{\beta})\bar{\Gamma}^2_{\alpha\beta}=\frac{2(cx'+\rho'y')\lambda'(\rho)}{\lambda},\\
&& \tau^{\alpha}\Delta\varphi^{\beta}\bar{\Gamma}^1_{\alpha\beta}=-\frac{c\sigma'\lambda\lambda'(\rho)}{\sigma}y,\\
&& \tau^{\alpha}\Delta\varphi^{\beta}\bar{\Gamma}^2_{\alpha\beta}=\frac{\lambda'(\rho)}{\lambda}[\frac{c\sigma'}{\sigma}x+(\rho''+\frac{\sigma'}{\sigma}\rho')y],\\
&&\tau^{\alpha} g(\nabla\varphi^{\beta}, \nabla\varphi^{\rho})\partial_{\rho}{\bar
\Gamma_{\alpha\beta}^{1}}=-c\rho'(\lambda\lambda'(\rho))'(\rho)y,\\
&& \tau^{\alpha}g(\nabla\varphi^{\beta},\nabla\varphi^{\rho})\bar{\Gamma}^v_{\alpha\beta}\bar{\Gamma}^1_{v\rho}=-(c^2+\frac{k^2}{\sigma^2})\lambda'^2(\rho)x-c\rho'\lambda'^2(\rho)y,\\
&& \tau^{\alpha}g(\nabla\varphi^{\beta}, \nabla\varphi^{\rho})\partial_{\rho}{\bar
\Gamma_{\alpha\beta}^{2}}=c\rho'(\frac{\lambda'(\rho)}{\lambda})'(\rho)x+\rho'^2(\frac{\lambda'(\rho)}{\lambda}))'(\rho)y,
\end{eqnarray}
\begin{eqnarray}
&& \tau^{\alpha}g(\nabla\varphi^{\beta},\nabla\varphi^{\rho})\bar{\Gamma}^v_{\alpha\beta}\bar{\Gamma}^2_{v\rho}=c\rho'(\frac{\lambda'(\rho)}{\lambda})^2 x+\rho'^2(\frac{\lambda'(\rho)}{\lambda})^2 y-(c^2+\frac{k^2}{\sigma^2})\lambda'^2(\rho)y,\\
&& -\tau^vg(\nabla\varphi^{\alpha},\nabla\varphi^{\beta})\bar{R}^1_{\beta\alpha
v}=-(c^2+\frac{k^2}{\sigma^2})\lambda\lambda''(\rho)x+c\rho'\lambda\lambda''(\rho)y,
\end{eqnarray}
and
\begin{eqnarray}\label{131}
&&-\tau^vg(\nabla\varphi^{\alpha},\nabla\varphi^{\beta})\bar{R}^2_{\beta\alpha
v}=\frac{c\rho'\lambda''(\rho)}{\lambda}x-\frac{\rho'^2\lambda''(\rho)}{\lambda}y.
\end{eqnarray}
Substituting (\ref{121})$\sim$ (\ref{131}) into (\ref{BI3}) we conclude that the map $\varphi$ is biharmonic if and only if
\begin{equation}\notag
\begin{cases}
x''+\frac{\sigma'}{\sigma}x'-(c^2+\frac{k^2}{\sigma^2})(\lambda\lambda'(\rho))'(\rho)x-2c\lambda\lambda'(\rho)y'
-c \left (\frac{\sigma'\lambda\lambda'(\rho)}{\sigma}
+2\rho'\lambda'^2(\rho)\right)y=0,\\
y''+(\frac{\sigma'}{\sigma}+\frac{2\rho'\lambda'(\rho)}{\lambda})y'
+\left(\frac{\lambda'(\rho)}{\lambda}(\rho''+\frac{\sigma'}{\sigma}\rho')-(c^2+\frac{k^2}{\sigma^2})\lambda'^2(\rho)
\right)y\\
+\frac{2c\lambda'(\rho)}{\lambda}x'+
c\left(\frac{\sigma'\lambda'(\rho)}{\sigma\lambda}
+\frac{2\rho'\lambda''(\rho)}{\lambda}\right)x=0,\\
x=\tau^1=\rho''+\frac{\sigma'}{\sigma}\rho'-(c^2+\frac{k^2}{\sigma^2})\lambda\lambda'(\rho),\\
y=\tau^2=\frac{2c\rho'\lambda'(\rho)}{\lambda}+\frac{c\sigma'}{\sigma},
\end{cases}
\end{equation} 
which is equivalent to the system (\ref{Sl1}). Thus we we obtain the lemma.
\end{proof}
\begin{remark} (i) With $\sigma=1, c=0, k=1$ our Lemma \ref{PLast} recovers Proposition 3.1 in \cite{OL}; (ii) One can also check that our Lemma \ref{PLast} also recovers the case for $m=n=2$ in Theorem 5.4 in \cite{BMO2}.
\end{remark}

Some other straightforward applications of Lemma \ref{PLast} can be stated as
\begin{corollary}\label{Pc1}
The map $\varphi:(M^2,dr^2+\sigma^2(r)d\theta^2)\longrightarrow
(N^2,d\rho^2+\lambda^2(\rho)d\phi^2)$, $\varphi(r,\theta)=(\rho(r), k\theta+a_{2})$
is biharmonic if and only if it solves the system
\begin{equation}\label{Sl4}
\begin{cases}
x''+\frac{\sigma'}{\sigma}x'-\frac{k^2(\lambda\lambda')'(\rho)}{\sigma^2}x=0,\\
x=\tau^1=\rho''+\frac{\sigma'}{\sigma}\rho'-\frac{k^2\lambda\lambda'(\rho)}{\sigma^2}.
\end{cases}
\end{equation}

In particular, the rotationally symmetric map $\varphi:(T^2, dr^2+d\theta^2) \longrightarrow (S^2,h=d\rho^2+\sin^2 \rho d\phi^2)$ from a flat torus into a sphere with $\varphi(r,\theta)=(\rho(r),\kappa\theta)$ is biharmonic if and only if
\begin{equation}\label{rsyTS2}
\rho^{(4)}-2\kappa^2\cos(2\rho)\rho''+2\kappa^2\sin(2\rho)\rho'^2+\frac{\kappa^4}{4}\sin(4\rho)=0.
\end{equation}
\end{corollary}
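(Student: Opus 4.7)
The plan is to deduce Corollary \ref{Pc1} from Lemma \ref{PLast} as a direct specialization, followed by a concrete substitution for the torus-to-sphere case.

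First, set $c=0$ in Lemma \ref{PLast}. The formula $y=\tau^2=\frac{2c\rho'\lambda'(\rho)}{\lambda}+\frac{c\sigma'}{\sigma}$ immediately gives $y\equiv 0$, and the expression for $x=\tau^1$ reduces to $\rho''+\frac{\sigma'}{\sigma}\rho'-\frac{k^2}{\sigma^2}\lambda\lambda'(\rho)$. Inspecting the two ODE's in system (\ref{Sl1}), every term in the second equation carries a factor of $c$ or $y$, so it is satisfied trivially; the first equation loses its $2cy'+y^2$ and $c^2$ terms and collapses to $x''+\frac{\sigma'}{\sigma}x'-\frac{k^2(\lambda\lambda')'(\rho)}{\sigma^2}x=0$. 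This is exactly system (\ref{Sl4}).

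Next, I specialize to the map $\varphi:(T^2,dr^2+d\theta^2)\longrightarrow (S^2,d\rho^2+\sin^2\rho\,d\phi^2)$ by taking $\sigma(r)\equiv 1$, $\lambda(\rho)=\sin\rho$, and $k=\kappa$. Then $\sigma'/\sigma=0$, $\lambda\lambda'(\rho)=\tfrac{1}{2}\sin(2\rho)$, and $(\lambda\lambda')'(\rho)=\cos(2\rho)$. The expression for $x$ becomes $x=\rho''-\tfrac{\kappa^2}{2}\sin(2\rho)$, and differentiating twice and applying the chain rule yields $x''=\rho^{(4)}-\kappa^2\cos(2\rho)\rho''+2\kappa^2\sin(2\rho)(\rho')^2$.

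Substituting these into the reduced system (\ref{Sl4}) gives
\[
\rho^{(4)}-\kappa^2\cos(2\rho)\rho''+2\kappa^2\sin(2\rho)(\rho')^2-\kappa^2\cos(2\rho)\bigl(\rho''-\tfrac{\kappa^2}{2}\sin(2\rho)\bigr)=0,
\]
and combining the two $\rho''$ terms together with the identity $2\sin(2\rho)\cos(2\rho)=\sin(4\rho)$ delivers equation (\ref{rsyTS2}). The only mildly delicate step is tracking the chain-rule in $x''$ (since $x$ depends on $r$ both directly through $\rho''$ and through $\rho(r)$ inside $\sin(2\rho)$); everything else is bookkeeping.
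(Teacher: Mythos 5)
Your proposal is correct and follows exactly the route the paper intends: the paper gives no explicit proof, presenting the corollary as a "straightforward application" of Lemma \ref{PLast}, which is precisely your specialization $c=0$ followed by $\sigma\equiv 1$, $\lambda=\sin\rho$. Your computation of $x''$ and the final simplification using $\sin(2\rho)\cos(2\rho)=\tfrac12\sin(4\rho)$ check out.
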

\begin{remark}

Note that Equation (\ref{rsyTS2}) was obtained in \cite{MR} by using a 1-dimensional variational approach. It was also observed in \cite{MR} that when $\rho=\frac{\pi}{4}, \frac{3\pi}{4}$,
$\varphi$ give proper biharmonic maps.
\end{remark}
\begin{corollary}\label{Ps1}
The rotationally symmetric map $\varphi:(S^2,dr^2+\sin^{2}rd\theta^2)\longrightarrow
(S^2,d\rho^2+\sin^2\rho d\phi^2)$ with $\varphi(r,\theta)=(\rho(r), k\theta)$
is biharmonic if and only if the function $\rho=\rho(r)$ solves the system
\begin{equation}\label{Ps2}
\begin{cases}
x''+\cot{r} x'-\frac{k^2\cos2\rho}{\sin^{2}r}x=0,\\
x=\tau^1=\rho''+\cot r\rho'-\frac{k^2\sin2\rho}{2\sin^{2}r},
\end{cases}
\end{equation}
or equivalently, 
\begin{equation}\label{ys1}
\begin{cases}
\frac{d^2x}{d t^2}=k^2x\cos (2\rho),\\
x(t)=\cosh^2t\left(\frac{d^2\rho}{d t^2}-k^2\sin\rho\cos \rho\right),
\end{cases}
\end{equation}
where $t=\ln|\tan\frac{r}{2}|$.
\end{corollary}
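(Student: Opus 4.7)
The corollary has two parts: the system \eqref{Ps2} follows directly from Corollary \ref{Pc1}, and the equivalent system \eqref{ys1} follows by a conformal change of the radial variable. I would structure the proof accordingly.

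First, I would specialize Corollary \ref{Pc1} by taking $\sigma(r)=\sin r$ and $\lambda(\rho)=\sin\rho$. Then $\sigma'/\sigma=\cot r$, $\lambda\lambda'(\rho)=\tfrac{1}{2}\sin(2\rho)$, and $(\lambda\lambda')'(\rho)=\cos(2\rho)$. Substituting these into system \eqref{Sl4} of Corollary \ref{Pc1} reproduces system \eqref{Ps2} term for term, with no further work. This handles the first equivalence and accounts for the fact that the rotationally symmetric ansatz here is the $c=0$ case of Lemma \ref{PLast}.

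Second, I would pass to the conformal coordinate $t=\ln|\tan(r/2)|$ in which the round metric on $S^2$ becomes a flat cylinder up to a conformal factor. Using $\tan(r/2)=e^t$ and the half-angle formulas I would record the two identities
\begin{equation}\notag
\sin r=\frac{1}{\cosh t}, \qquad \frac{d}{dr}=\frac{1}{\sin r}\,\frac{d}{dt}.
\end{equation}
The key computation is then
\begin{equation}\notag
\frac{d^2 f}{dr^2}+\cot r\,\frac{df}{dr}=\frac{1}{\sin^2 r}\,\frac{d^2 f}{dt^2},
\end{equation}
which follows by differentiating $df/dr=(\sin r)^{-1}df/dt$ once more with respect to $r$ and noting that the $-\cos r/\sin^2 r$ term cancels against $\cot r \cdot df/dr$. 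This identity is the crux of the whole calculation, and once it is in place the rest is algebra.

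Third, I would apply these two identities to each equation of \eqref{Ps2}. Multiplying the first equation of \eqref{Ps2} by $\sin^2 r=\operatorname{sech}^2 t$ removes the $1/\sin^2 r$ from the curvature term and collapses the $x''+\cot r\,x'$ into $d^2 x/dt^2$, yielding $d^2x/dt^2=k^2 x\cos(2\rho)$. Applying the same identity to the expression $x=\rho''+\cot r\,\rho'-k^2\sin(2\rho)/(2\sin^2 r)$ and using $\sin 2\rho=2\sin\rho\cos\rho$ together with $1/\sin^2 r=\cosh^2 t$ gives the second equation of \eqref{ys1}. This completes the equivalence of \eqref{Ps2} and \eqref{ys1}.

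The proof has no real obstacle beyond bookkeeping; the only step that requires a moment of care is verifying that the radial Laplacian on $S^2$ in the coordinate $t$ takes the clean form $\operatorname{sech}^2 t\cdot d^2/dt^2$, after which both reductions are mechanical substitutions.
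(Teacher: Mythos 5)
Your proposal is correct and follows the same route as the paper: specialize Corollary \ref{Pc1} with $\sigma=\sin r$, $\lambda=\sin\rho$ to get (\ref{Ps2}), then pass to $t=\ln|\tan\frac{r}{2}|$ to get (\ref{ys1}). The paper states this in one sentence; your computations (in particular $\sin r=1/\cosh t$ and the identity $f''+\cot r\, f'=\frac{1}{\sin^2 r}\frac{d^2 f}{dt^2}$) correctly fill in the details it omits.
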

\begin{proof}
Equation (\ref{Ps2}) is obtained by applying Corollary \ref{Pc1} with $\sigma=\sin r, \lambda=\sin\rho, a_2=0$ whilst Equation (\ref{ys1}) comes from Equation (\ref{Ps2}) by a transformation $t=\ln|\tan \frac{r}{2}|$. 
\end{proof}
\section{A classification of biharmonic maps between 2-spheres}
Note that there are many smooth maps between spheres. For example, the following family of maps was studied in Peng-Tang \cite{PT}.
Let $f_k : S^n \longrightarrow S^n\; (k > 0)$ be defined by 
\begin{equation}\label{FM}
(\cos r, \sin r \cdot X) \longrightarrow (\cos (kr), \sin (k r) \cdot X),
\end{equation}
where $0\le r \le \pi $ and $ X\in S^{n-1}\subset \r^n$. It was proved in \cite{PT} that $f_k$ is a k-form and the Brouwer degree of $f_k$ is\\
${\rm deg} f_k=
\begin{cases}
k,\;\; if\; $n$\; is\; odd;\\
1\;\; if\; $n$\; is\; even \;and\; $k$\; is\; odd;\\
0\;\; otherwise.
\end{cases}$\\

Note also that with respect to geodesic polar coordinates, this family of maps can be described as $f_k(r,\theta)= (kr, \theta)$, a family of rotationally symmetric maps between spheres. \\

It is easy to check that the quadratic polynomial map $F:\mathbb{R}^{3}\longrightarrow \mathbb{R}^{3}$ defined by $F(x, y, z)=(x^{2}-y^{2}-z^{2}, \;2xy,\; 2xz)$ restricts to a map between spheres $f=F|_{S^{2}}:S^{2}\longrightarrow S^{2}$. Using geodesic polar coordinates $(r, \theta)$ on the domain sphere and $(\rho, \phi)$ on the target sphere we can have a local expression of the map given by 
\begin{equation}
f:(S^2, {\rm d}\,r^{2}+\sin ^{2} r\,{\rm d}\,\theta^{2})\longrightarrow (S^2, {\rm d}\,\rho^{2}+\sin ^{2} \rho\,{\rm d}\,\phi^{2}),\;\;f(r,\theta)=(2r, \theta).
\end{equation}

It follows that this restriction of the polynomial map is a rotationally symmetric map between $2$-spheres belonging to the family (\ref{FM}): $f_k: S^2\longrightarrow S^2$. One can further check that the tension field of the map $f$ is $\tau(f)=2 \sin 2r\frac{\partial}{\partial r}$, so it is NOT a harmonic map. It would be interesting to know whether there exists any proper biharmonic map in the family of rotationally symmetric maps $f_k$. \\

Our next theorem gives a classification of biharmonic maps in a class of maps $S^2\longrightarrow S^2$ which includes the family $f_k$ of rotationally symmetric maps as a subset.
\begin{theorem}\label{ow111}
The rotationally symmetric map $\varphi:(S^2,dr^2+\sin^2rd\theta^2)\longrightarrow
(S^2,d\rho^2+\sin^2\rho d\phi^2)$ with $\varphi(r,\theta)=(ar+a_1, k\theta)$ and $a\neq 0$
is biharmonic if and only if $a^2=1, k^2=1$, and $a_1=0$, or, $a_1=\pi $, i.e., the map is actually harmonic.
\end{theorem}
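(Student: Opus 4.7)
My plan is to specialize Corollary \ref{Ps1} to the affine ansatz $\rho(r)=ar+a_1$. Since $\rho''\equiv 0$, the tension component simplifies to
$$x=a\cot r-\frac{k^2\sin(2ar+2a_1)}{2\sin^2 r},$$
and biharmonicity reduces to the single scalar condition
$$x''+\cot r\, x'-\frac{k^2\cos(2ar+2a_1)}{\sin^2 r}\,x=0, \qquad r\in(0,\pi).$$
I would carry out the differentiation of $x$, clear denominators by multiplying through by $4\sin^4 r$, and apply the product-to-sum identities to recast the biharmonic condition as a single trigonometric identity of the form
$$2a\sin 2r+2k^2(2a^2-3)\sin(2ar+2a_1)-k^2(2a^2-2a+1)\sin(2(a+1)r+2a_1)-k^2(2a^2+2a+1)\sin(2(a-1)r+2a_1)+k^4\sin(4ar+4a_1)=0,$$
which must hold for all $r\in(0,\pi)$.

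From here, the main step is a frequency-matching analysis. The first summand $2a\sin 2r$ is the only one free of the factor $k$, and since $a\ne 0$ it is nonzero; to be cancelled, at least one other term must have frequency $2$, which happens only when $2|a|$, $2|a+1|$, $2|a-1|$, or $4|a|$ equals $2$. Together with $a\ne 0$, this restricts $a$ to the finite set $\{\pm 1,\,\pm 2,\,\pm 1/2\}$. I would dispose of the non-unit values by an isolated-mode argument: for $a=\pm 2$, the term $k^4\sin(4ar+4a_1)$ has frequency $8$ and is the unique contributor of that frequency, so its coefficient forces $k=0$, after which $2a\sin 2r$ cannot vanish identically; for $a=\pm 1/2$, the same reasoning applies to the unique frequency-$3$ term, whose coefficient is proportional to $k^2$. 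This leaves only $a^2=1$.

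In that remaining case the frequency spectrum collapses to $\{0,2,4\}$, and I would expand the identity in the Fourier basis $\{1,\sin 2r,\cos 2r,\sin 4r,\cos 4r\}$ to obtain linear conditions in $\sin 2a_1,\cos 2a_1,\sin 4a_1,\cos 4a_1$ and $k^2$. The constant term forces $\sin 2a_1=0$; the $\sin 4r$ coefficient then reduces to $k^4=k^2\cos 2a_1$, and since $k^2\ge 0$ this gives $\cos 2a_1=1$ and $k^2=1$; the remaining coefficients are then automatically satisfied. So $a_1\in\pi\mathbb Z$, i.e., $a_1=0$ or $a_1=\pi$ modulo $2\pi$. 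For these values a direct substitution shows that $x$ itself vanishes identically, so $\tau(\varphi)\equiv 0$ and $\varphi$ is in fact harmonic, as claimed. The main obstacle in the argument is the bookkeeping in the resonant case analysis: for each of $|a|\in\{2,1/2\}$ one must exhibit a single isolated frequency that forces $k=0$, and verify that no subtle cancellation among the remaining summands is overlooked at any other value of $a$.
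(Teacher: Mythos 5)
Your proposal is correct, and it reaches the classification by a genuinely different route from the paper. You reduce, as the paper does, to the single scalar identity obtained by substituting $\rho=ar+a_1$ into Corollary \ref{Ps1} (your product-to-sum form, with modes of frequencies $2$, $2a$, $2(a\pm1)$, $4a$ and coefficients $2a$, $2k^2(2a^2-3)$, $-k^2(2a^2\mp 2a+1)$, $k^4$, agrees with the paper's equation (\ref{ow0}) after multiplying by $2$). But where the paper differentiates this identity three times, evaluates $f,f',f'',f'''$ at $r_0=\pi/2$, and then grinds through the resulting polynomial systems (Cases (i) and (ii), ending with the cubic $3t^3+13t^2-t+1=0$ having no positive root), you exploit linear independence of $\sin(\omega r+\phi)$ for distinct $|\omega|$: the bare frequency-$2$ term $2a\sin 2r$ forces $a\in\{\pm1,\pm2,\pm1/2\}$, the isolated top mode (frequency $8$, coefficient $k^4$, resp.\ frequency $3$, coefficient $-k^2(2a^2\mp2a+1)=-k^2/2\neq0$) kills $a=\pm2$ and $a=\pm1/2$, and the Fourier coefficients at frequencies $0,2,4$ in the case $a^2=1$ yield $\sin 2a_1=0$, $k^2=\cos 2a_1$, hence $k^2=1$ and $a_1\in\pi\mathbb{Z}$, after which $\tau^1\equiv0$. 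I checked the coefficient bookkeeping ($2a^2\pm2a+1>0$ always, so the isolated modes genuinely force $k=0$) and the frequency-collision list is exhaustive, so there is no hidden cancellation. Your approach buys a more structural and less computational argument — it explains a priori why only finitely many slopes $a$ can occur and avoids the paper's somewhat ad hoc choice of the evaluation point $\pi/2$ and the attendant polynomial algebra — at the cost of a careful multi-case mode count; the paper's method needs less trigonometric manipulation up front but lands in a messier algebraic case analysis. Both are sound "necessary conditions plus final verification of harmonicity" arguments.
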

\begin{proof}
For $\rho=ar+a_1,$ it follows from Corollary \ref{Ps1} that $\varphi$ is biharmonic if and only if it solves the system
\begin{equation}\label{ow41}
\begin{cases}
x''+\cot rx'-\frac{k^2\cos2\rho}{\sin^2r}x=0,\\
x=\tau^1=a\cot r-\frac{k^2\sin2\rho}{2\sin^2r},\\
\rho=ar+a_1.
\end{cases}
\end{equation}
A straightforward computation using the last two equations of (\ref{ow41}) gives
\begin{equation}\label{ow410}
\begin{cases}
x'=-\frac{a}{\sin^2r}-\frac{k^2a\sin r\cos2\rho-k^2\cos r\sin2\rho}{\sin^3r},\\
x''=\frac{2a\cos r}{\sin^3r}-\frac{(k^2-2k^2a^2)\sin^2 r\sin2\rho+3k^2\cos^2 r\sin2\rho-4k^2a\sin r\cos r\cos2\rho}{\sin^4r}.
\end{cases}
\end{equation}
Substituting (\ref{ow410}) into Equation $(\ref{ow41})$ we have
\begin{equation}\notag
\begin{cases}
\frac{2a\sin r\cos r+2(2k^2a^2-k^2)\sin^2 r\sin2\rho-4k^2\cos^2 r\sin2\rho+4k^2a\sin r\cos r\cos2\rho+k^4\sin2\rho\cos2\rho}{2\sin^4r}=0,\\
\rho=ar+a_1,
\end{cases}
\end{equation}
which is equivalent to
\begin{equation}\notag
\begin{cases}
2a\sin r\cos r+2(2k^2a^2-k^2)\sin^2 r\sin2\rho
-4k^2\cos^2 r\sin2\rho\\+4k^2a\sin r\cos r\cos2\rho+k^4\sin2\rho\cos2\rho=0,\\
\rho=ar+a_1.
\end{cases}
\end{equation}
By a further computation, we can rewrite the above equation as
\begin{equation}\label{ow0}
\begin{cases}
a\sin 2r+(2k^2a^2-3k^2)\sin2\rho
-(2k^2a^2+k^2)\cos2 r\sin2\rho\\+2k^2a\sin 2r\cos2\rho+k^4\sin2\rho\cos2\rho=0,\\
\rho=ar+a_1.
\end{cases}
\end{equation}
Write $f(r)=a\sin 2r+(2k^2a^2-3k^2)\sin2\rho-(2k^2a^2+k^2)\cos2 r\sin2\rho\\+2k^2a\sin 2r\cos2\rho+k^4\sin2\rho\cos2\rho$, then we have
\begin{equation}\notag
\begin{array}{lll}
f'(r)=2a\cos 2r+2a(2k^2a^2-3k^2)\cos2\rho+2k^2\sin 2 r\sin2\rho\\
+(2k^2a-4k^2a^3)\cos2 r\cos2\rho+2k^4a\cos4\rho,
\end{array}
\end{equation}
\begin{equation}\notag
\begin{array}{lll}
f''(r)=-4a\sin 2r-4a^2(2k^2a^2-3k^2)\sin2\rho
\\+(8k^2a^4-4k^2a^2+4k^2)\cos2 r\sin2\rho
+8k^2a^3\sin2 r\cos2\rho
-8k^4a^2\sin4\rho,
\end{array}
\end{equation}
and
\begin{equation}\notag
\begin{array}{lll}
f'''(r)=-8a\cos 2r-8a^3(2k^2a^2-3k^2)\cos2\rho\\
+(-32k^2a^4+8k^2a^2-8k^2)\sin2 r\sin2\rho\\
+(16k^2a^5+8k^2a^3+8k^2a)\cos2 r\cos2\rho
-32k^4a^3\cos4\rho.
\end{array}
\end{equation}
Equation $(\ref{ow0})$ implies that for any $r$, we have
\begin{equation}\label{ow05}
\begin{cases}
f(r)=0,\\
f'(r)=0,\\
f''(r)=0,\\
f'''(r)=0,\\
\rho=ar+a_1.\\
\end{cases}
\end{equation}

Since $a\neq 0$, one can easily check that if $k= 0$, then Equation (\ref{ow41} ) has no solution. So from now on we assume that $k\ne 0$. Substituting $r_0=\frac{\pi}{2},\;\rho_0=a\frac{\pi}{2}+a_1$ into
Equation (\ref{ow05}) we have
\begin{equation}\label{ow06}
\begin{cases}
f(r_0)=2k^2\sin\rho_0\cos\rho_0(4a^2-2+k^2\cos2\rho_0)=0,\\
f'(r_0)=2a\left\{-1-k^4+(4k^2a^2-4k^2)\cos2\rho_0+2k^4\cos^22\rho_0\right\}=
0,\\
f''(r_0)=8k^2\sin\rho_0\cos\rho_0\left(-4a^4+4a^2-1-4k^2a^2\cos2\rho_0\right)=0,\\
f'''(r_0)=8a\left\{1+(-4k^2a^4+2k^2a^2-k^2)\cos2\rho_0-8k^4a^2\cos^22\rho_0+4k^4a^2\right\}=0,\\
2\rho_0=a\pi+2a_1.
\end{cases}
\end{equation}

Noting that $r\in(0,\pi)$ and $\rho(r) \in(0,\pi)$ we conclude that $k\sin \rho_0\ne 0$. We will solve Equation $(\ref{ow06})$ by
the following two cases:\\
{\bf Case $(i)$:} $\cos\rho_0\neq0$.\\
In this case, Equation $(\ref{ow06})$ becomes
\begin{equation}\label{ow073}
\begin{cases}
4a^2-2+k^2\cos2\rho_0=0,\\
-1-k^4+(4k^2a^2-4k^2)\cos2\rho_0+2k^4\cos^22\rho_0=
0,\\
-4a^4+4a^2-1-4k^2a^2\cos2\rho_0=0,\\
1+(-4k^2a^4+2k^2a^2-k^2)\cos2\rho_0-8k^4a^2\cos^22\rho_0+4k^4a^2=0,\\
2\rho_0=a\pi+2a_1.
\end{cases}
\end{equation}
By substituting the first equation of $(\ref{ow073})$ into the second , the third, and the fourth we have
\begin{equation}\label{ow083}
\begin{cases}
k^4=16a^4-8a^2-1,\\
12a^4-4a^2-1=0,\\
-112a^6+112a^4-24a^2-1+4k^4a^2=0.
\end{cases}
\end{equation}
Solving the second equation (\ref{ow083}) we have $a^2=\frac{1}{2}$. Substituting $a^2=\frac{1}{2}$ into the first equation of (\ref{ow083}) yields $k^4=-1$, which shows Equation (\ref{ow083}) and hence (\ref{ow073}) has no real solution in this case.\\
{\bf Case $(ii)$:} $\cos\rho_0=0$,\;i.e.,\;$\rho_0=\frac{a\pi}{2}+a_1=\frac{\pi}{2}$.\\
In this case, Equation $(\ref{ow06})$ reduces to
\begin{equation}\notag
\begin{cases}
-1+k^4-4k^2a^2+4k^2=0,\\
1+k^2+4k^2a^4-2k^2a^2-4k^4a^2=0,
\end{cases}
\end{equation}
which is equivalent to
\begin{equation}\label{ow008}
\begin{cases}
4k^2(a^2-1)=k^4-1,\\
4k^2a^2(a^2-1)=4k^4a^2-2k^2a^2-k^2-1.
\end{cases}
\end{equation}

We can easily check that if $a^2=1$, then Equation (\ref{ow008}) has a solution $k^2=1$. In this case, we have $a_1=0$ or $a_1=\pi$ and hence $\tau^1=a\cot r-\frac{k^2\sin2\rho}{2\sin^2r}=0$. This implies that the map $\varphi:(S^2,dr^2+\sin^2rd\theta^2)\longrightarrow
(S^2,d\rho^2+\sin^2\rho d\phi^2)$ with $\varphi(r,\theta)=( r, k \theta) \; {\rm or}\; \varphi(r,\theta)=(- r+\pi, k \theta)$ is a harmonic map.\\
If $a^2\neq1$, then $k^2\neq1$. In this case, we solve Equation (\ref{ow008}) for $a^2$ in terms of $k$ to have 
\begin{equation}\label{ow010}
a^2=\frac{k^2+1}{3k^4-2k^2+1}.
\end{equation}
Substituting (\ref{ow010}) into the first equation of $(\ref{ow008})$ we have
\begin{equation}\label{ow011}
3k^6+13k^4-k^2+1=0.
\end{equation}
Setting $k^2=t$, the above equation becomes
\begin{equation}\label{ow012}
3t^3+13t^2-t+1=0.
\end{equation}

Now we consider the function $\phi(t)=3t^3+13t^2-t+1$ defined on the interval $[0,+\infty)$. It is an elementary exercise to check that the absolute minimum value of this function over the interval $[0, \infty)$ is $\phi(\frac{-13+\sqrt{178}}{9})=\frac{4}{243}(1247-89\sqrt{178})>0$. It follows that Equation (\ref{ow012}) has no positive solution. This implies that Equation (\ref{ow011}) has no real solution and hence Equation (\ref{ow008}) has no solution in this case.\\

Summarizing the results in Cases (i) and (ii) we obtain the theorem.
\end{proof}
\begin{corollary}
The globally defined smooth map $f=F|_{S^{2}}:S^{2}\longrightarrow S^{2}$ obtained from the restriction of the polynomial map $F:\mathbb{R}^{3}\longrightarrow \mathbb{R}^{3}$, $F(x, y, z)=(x^{2}-y^{2}-z^{2}, \;2xy,\; 2xz)$, is neither a harmonic nor a biharmonic map.
\end{corollary}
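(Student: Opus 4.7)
The plan is a direct application of Theorem \ref{ow111}. First I would check that $F$ really maps $S^{2}$ to itself by the identity
\[
|F(x,y,z)|^{2} = (x^{2}-y^{2}-z^{2})^{2} + 4x^{2}(y^{2}+z^{2}) = (x^{2}+y^{2}+z^{2})^{2},
\]
which equals $1$ on $S^{2}$. Then, choosing geodesic polar coordinates $(r,\theta)$ on the domain sphere centered at the point $(1,0,0)$ so that $(x,y,z)=(\cos r,\sin r\cos\theta,\sin r\sin\theta)$, the double-angle identities yield
\[
f(r,\theta) = (\cos 2r,\ \sin 2r\cos\theta,\ \sin 2r\sin\theta),
\]
which in geodesic polar coordinates $(\rho,\phi)$ on the target sphere reads $(\rho,\phi)=(2r,\theta)$. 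Hence, on the open dense coordinate chart $0<r<\pi/2$, the map $f$ belongs exactly to the family classified by Theorem \ref{ow111}, with parameters $a=2$, $a_{1}=0$, $k=1$.

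Next, I would invoke Theorem \ref{ow111}. Since $a=2\neq 0$, the theorem applies and states that biharmonicity forces $a^{2}=1$; but here $a^{2}=4$, so $f$ is not biharmonic on this open set, and therefore not biharmonic as a global map. For the harmonicity part one can simply quote the paper's own observation that $\tau(f)=2\sin 2r\,\partial/\partial r$ does not vanish; alternatively, plugging $\rho=2r$ into the tension-field formula of Corollary \ref{Ps1} and simplifying via $\sin 4r=4\sin r\cos r\cos 2r$ and $1-\cos 2r=2\sin^{2}r$ reproduces the same non-vanishing expression $\tau^{1}=2\sin 2r$.

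There is essentially no obstacle here: the corollary reduces to a one-line coordinate computation plus a citation of the preceding classification theorem. The only subtlety worth flagging is that geodesic polar coordinates are singular at the poles, so the formula $(r,\theta)\mapsto(2r,\theta)$ only describes $f$ on $0<r<\pi/2$; however, since both harmonicity and biharmonicity are local PDE conditions and $f$ is globally smooth on $S^{2}$, failure of either equation on this open chart is sufficient to exclude the property globally.
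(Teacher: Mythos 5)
Your proposal is correct and follows essentially the same route as the paper: identify $f$ in polar coordinates as the rotationally symmetric map $(r,\theta)\mapsto(2r,\theta)$, rule out biharmonicity by Theorem \ref{ow111} since $a^2=4\neq 1$, and rule out harmonicity via $\tau^1=2\sin 2r\neq 0$ from Corollary \ref{Ps1}. Your extra remarks on the coordinate chart $0<r<\pi/2$ and the locality of the PDE conditions are sensible refinements of the same argument rather than a different approach.
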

\begin{proof}
As we mentioned at the beginning of the section, the map $f$ is a rotationally symmetric map with $f(r, \theta)=(2r, \theta)$. So, by our Theorem \ref{ow111}, the map $f$ is not a biharmonic map. Substituting $a=2, k=1$ into the second equation of (\ref{ow41}) we obtain the first component of the tension field $\tau^1=2\sin 2r\ne 0$. It follows that the map $f$ is neither a harmonic map.
\end{proof}
\section {Locally defined biharmonic maps from a 2-sphere}
We first prove the following proposition which shows that for a special class of maps between rotationally symmetric manifolds, the biharmonic map equation reduces to biharmonic function equation. This will be used to construct many locally defined proper biharmonic maps from a $2$-sphere into itself.
\begin{proposition}\label{Pc21}
The map $\varphi:(M^2,dr^2+\sigma^2(r)d\theta^2)\longrightarrow
(N^2,d\rho^2+\lambda^2(\rho)d\phi^2)$ with $\varphi(r,\theta)=(\rho(r), a_{2})$
is biharmonic if and only if $\Delta_M^2\rho=0$, i.e., $\rho(r)$ is a biharmonic function on $(M^2,dr^2+\sigma^2(r)d\theta^2)$, which can be determined by the integral
\begin{eqnarray}\label{Pc2}
&&\rho(r)=\int\left\{\frac{\int\left(C_1\sigma(r)\int\frac{{\rm d}r}{\sigma(r)}+C_2\sigma(r)\right){\rm d}r+C_3}{\sigma(r)}\right\}{\rm d}r+C_4
,\\\notag
\end{eqnarray}
where $C_{1},\,C_{2},\,C_{4}\,$ and $C_{4}\,$ are constants. Furthermore, the map is proper biharmonic if $C_1^2+C_2^2\ne 0$.
\end{proposition}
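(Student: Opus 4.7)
The plan is to specialize Corollary \ref{Pc1} to the degenerate case $k=0$, forced by the constant second component $a_2$, and to recognize the resulting system as an iterated Laplace equation in $\rho$. Setting $k=0$ in (\ref{Sl4}) collapses it to the pair
\[
x'' + \frac{\sigma'}{\sigma} x' = 0, \qquad x = \rho'' + \frac{\sigma'}{\sigma}\rho'.
\]
Since $\rho$ depends only on $r$, the domain Laplacian acts on it as $\Delta_M \rho = \frac{1}{\sigma}(\sigma\rho')' = \rho'' + \frac{\sigma'}{\sigma}\rho'$, so the system reads $x = \Delta_M \rho$ together with $\Delta_M x = 0$, i.e., $\Delta_M^2 \rho = 0$. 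This immediately gives the first claim of the proposition.

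For the explicit formula, I would integrate the system in conservation form, using $\Delta_M f = \frac{1}{\sigma}(\sigma f')'$. The equation $(\sigma x')' = 0$ yields $x(r) = C_1 \int \frac{dr}{\sigma(r)} + C_2$; substituting into $(\sigma\rho')' = \sigma x$ and integrating once produces $\sigma \rho' = \int\!\bigl(C_1\sigma\int\tfrac{dr}{\sigma}+C_2\sigma\bigr)\,dr + C_3$, after which a final division by $\sigma$ and integration give exactly the nested expression (\ref{Pc2}). The only care needed here is bookkeeping of the four constants of integration so that they land in the positions asserted in the statement.

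For the properness assertion, I would first note via Lemma \ref{PLast} with $c=k=0$ that $\tau^2 \equiv 0$, so that $\varphi$ is harmonic if and only if $\tau^1 = x \equiv 0$. Because $r \mapsto \int \frac{dr}{\sigma(r)}$ is strictly monotone wherever $\sigma > 0$ and hence nonconstant, the identity $C_1\!\int\!\frac{dr}{\sigma} + C_2 \equiv 0$ would force $C_1 = C_2 = 0$; thus $C_1^2 + C_2^2 \neq 0$ is sufficient for $\varphi$ to be proper biharmonic. The whole argument reduces to Corollary \ref{Pc1} plus two successive integrations of a first-order conservation law, so there is no serious obstacle; the main conceptual step is simply the recognition that, under this choice of components, the fourth-order biharmonic system degenerates into the iterated Laplacian $\Delta_M^2\rho = 0$.
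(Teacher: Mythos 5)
Your proposal is correct and follows essentially the same route as the paper: both specialize Corollary \ref{Pc1} (i.e., set $k=0$) to obtain the system $x''+\frac{\sigma'}{\sigma}x'=0$, $x=\rho''+\frac{\sigma'}{\sigma}\rho'$, identify it as $\Delta_M^2\rho=0$, and integrate twice to reach the nested formula. Your closing remark on properness (that $\tau^2\equiv 0$ and $x\equiv 0$ would force $C_1=C_2=0$ by monotonicity of $\int\frac{dr}{\sigma}$) is slightly more explicit than what the paper records, but it is the same observation the authors use later in the proof of Theorem \ref{pb}.
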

\begin{proof}
Using Corollary \ref{Pc1} , it follows that $\varphi:(M^2,dr^2+\sigma^2(r)d\theta^2)\longrightarrow
(N^2,d\rho^2+\lambda^2(\rho)d\phi^2)$ with $\varphi(r,\theta)=(\rho(r), a_{2})$ is biharmonic if and only if it solves the system
\begin{equation}\label{c1}
\begin{cases}
x''+\frac{\sigma'}{\sigma}x'=0,\\
x=\tau^1=\rho''+\frac{\sigma'}{\sigma}\rho'.
\end{cases}
\end{equation}
Noting that $x=\rho''+\frac{\sigma'}{\sigma}\rho'=\Delta_M \rho$ and $\Delta_M x=x''+\frac{\sigma'}{\sigma}x'$ we conclude that Equation (\ref{c1}) is equivalent to $\Delta_M^2\rho=0$. This gives the first statement of the proposition.\\

To solve Equation (\ref{c1}),  we integrate the first equation to have
\begin{eqnarray}\notag
&&x=C_1\int\frac{{\rm d}r}{\sigma(r)}+C_2.\\\notag
\end{eqnarray}
Substituting this into the second equation of (\ref{c1}) we have
\begin{eqnarray}\notag
&&\rho''+\frac{\sigma'}{\sigma}\rho'=C_1\int\frac{{\rm d}r}{\sigma(r)}+C_2,
\end{eqnarray}
which is solved by
\begin{eqnarray}\notag
\rho(r)=\int\left\{\frac{\int\left(C_1\sigma(r)\int\frac{{\rm d}r}{\sigma(r)}+C_2\sigma(r)\right){\rm d}r+C_3}{\sigma(r)}\right\}{\rm d}r+C_4,
\end{eqnarray}
\end{proof}
where $C_{1},\,C_{2},\,C_{4}\,$ and $C_{4}\,$ are constant. Thus, we obtain the proposition.
\begin{remark} 
Applying Proposition \ref{Pc21} we can conclude that the map \\$\varphi: (\r^2\setminus\{0\}, dr^2+r^2d\theta^2)\longrightarrow (N^2,d\rho^2+\lambda^2(\rho)d\phi^2)$ with $\varphi(r,\theta)=(\rho(r), a_{2})$ is biharmonic if and only if $\rho(r)=c_1r ^2\ln r +c_2r^2+c_3 \ln r+c_4$, which is a result in (b) of Proposition 5.5 (for $m=n=2$) in \cite{BMO2}.
\end{remark}
As another straightforward application of Proposition \ref{Pc21}, we have the following corollary which gives many locally defined proper biharmonic maps between $2$-spheres.
\begin{corollary}\label{S2}
For constants $C_1, C_2, C_3, C_4$ with $C_1^2+C_2^2\ne 0$, the rotationally symmetric map $\varphi:(S^2,dr^2+\sin^2rd\theta^2)\longrightarrow
(S^2,d\rho^2+\sin^2\rho \,d\phi^2)$ with $\varphi(r,\theta)=(\rho(r), a_{2})$
is a proper biharmonic if 
\begin{eqnarray}\label{Pc2}
\rho(r)=\int\left\{\frac{\int\left(C_1\sin r\int\frac{{\rm d}r}{\sin r}+C_2\sin r\right){\rm d}r+C_3}{\sin r}\right\}{\rm d}r+C_4.
\end{eqnarray}
\end{corollary}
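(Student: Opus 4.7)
The proposal is that this corollary is essentially a direct specialization of Proposition \ref{Pc21} to the case of spheres, so the main task is to verify that the hypotheses of that proposition are met and that the ``proper'' condition corresponds exactly to $C_1^2+C_2^2\ne 0$.

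First, I would identify the warping functions: the domain $(S^2,dr^2+\sin^2 r\, d\theta^2)$ has $\sigma(r)=\sin r$ and the target $(S^2, d\rho^2+\sin^2\rho\, d\phi^2)$ has $\lambda(\rho)=\sin \rho$. The map $\varphi(r,\theta)=(\rho(r),a_2)$ matches the form considered in Proposition \ref{Pc21}. Applying that proposition directly, $\varphi$ is biharmonic if and only if $\rho$ satisfies $\Delta_M^2\rho=0$ on the round $S^2$, equivalently the system
\begin{equation}\notag
x''+\cot r\, x' = 0, \qquad x=\rho''+\cot r\,\rho'.
\end{equation}
Substituting $\sigma(r)=\sin r$ into the integral expression in Proposition \ref{Pc21} immediately yields the formula (\ref{Pc2}) stated in the corollary, so any $\rho$ of this form is automatically biharmonic.

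Next, to confirm that the construction produces \emph{proper} biharmonic maps under the assumption $C_1^2+C_2^2\ne 0$, I would revisit the first step of integration in the proof of Proposition \ref{Pc21}: solving $x''+\cot r\, x'=0$ gives
\begin{equation}\notag
x(r) = C_1\int\frac{dr}{\sin r}+C_2 = C_1\ln\left|\tan\tfrac{r}{2}\right|+C_2.
\end{equation}
Since $\ln|\tan(r/2)|$ is a nonconstant function of $r$, the expression $C_1\ln|\tan(r/2)|+C_2$ vanishes identically only when $C_1=C_2=0$. Thus, under the hypothesis $C_1^2+C_2^2\ne 0$, the first component $\tau^1=x$ of the tension field is not identically zero, so $\varphi$ is not harmonic and is therefore properly biharmonic.

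There is no real obstacle in this argument; the work was already carried out in Proposition \ref{Pc21}, and the corollary just specializes the warping function and records the ``proper'' condition. The only mild subtlety is recognizing that the constants $C_3, C_4$ contribute only to the harmonic part of $\rho$ (they affect $\rho$ but not $x=\tau^1$), which is why the nondegeneracy condition involves only $C_1$ and $C_2$.
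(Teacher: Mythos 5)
Your proposal is correct and matches the paper's approach: the paper presents this corollary as a direct specialization of Proposition \ref{Pc21} with $\sigma(r)=\sin r$, which is exactly what you do, and your verification that $\tau^1=C_1\ln|\tan\frac{r}{2}|+C_2$ vanishes identically only when $C_1=C_2=0$ correctly justifies the properness condition already asserted in that proposition.
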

\begin{remark}
We would like to point out that Corollary \ref{S2} provides many examples of locally defined proper biharmonic maps between two $2$-spheres. However, none of them can be extended to a globally defined map $S^2\longrightarrow S^2$. This can be seen from the fact that each of the maps provided by Corollary \ref{S2} is determined by a locally defined biharmonic function on $S^2$. No locally defined biharmonic function can be extended to the whole sphere $S^2$ as it is well known that any globally defined biharmonic function on $S^2$ has to be a constant.
\end{remark}

In the rest of this section, we will show that the equations for biharmonic maps from $S^2$ into some special choices of rotationally symmetric manifolds can be solved completely. First, let us prove the following lemma.
\begin{lemma}\label{Lm10}
Let $\lambda^2(\rho)=A\rho^2+2C_{0}\rho+C>0$, and $C_0,A, C, k, a_2 $ be constants. Then, the map $\varphi:(M^2,dr^2+\sigma^2(r)d\theta^2)\longrightarrow
(N^2,d\rho^2+\lambda^2(\rho)d\phi^2)$ defined by $\varphi(r,\theta)=(\rho(r), k\theta+a_{2})$
is biharmonic if and only if
\begin{equation}\label{pl31}
\begin{cases}
x''+\frac{\sigma'}{\sigma}x'-\frac{k^2A}{\sigma^2}x=0,\\
x=\tau^1=\rho''+\frac{\sigma'}{\sigma}\rho'-\frac{k^2(A\rho+C_0)}{\sigma^2}.
\end{cases}
\end{equation}
\end{lemma}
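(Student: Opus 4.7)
The plan is to apply Corollary \ref{Pc1} directly, which already handles the case $\varphi(r,\theta)=(\rho(r),k\theta+a_2)$ for arbitrary warping functions $\sigma$ and $\lambda$. The only work remaining is to specialize the general expressions $\lambda\lambda'(\rho)$ and $(\lambda\lambda')'(\rho)$ to the particular choice $\lambda^2(\rho)=A\rho^2+2C_0\rho+C$.

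First I would compute $\lambda\lambda'(\rho)$ by differentiating $\lambda^2$. Since $\frac{d}{d\rho}(\lambda^2)=2\lambda\lambda'$, we immediately get
\[
\lambda\lambda'(\rho)=A\rho+C_0,
\]
and therefore
\[
(\lambda\lambda')'(\rho)=A.
\]
Substituting these two expressions into the biharmonic system of Corollary \ref{Pc1},
\[
\begin{cases}
x''+\dfrac{\sigma'}{\sigma}x'-\dfrac{k^2(\lambda\lambda')'(\rho)}{\sigma^2}\,x=0,\\[4pt]
x=\rho''+\dfrac{\sigma'}{\sigma}\rho'-\dfrac{k^2\lambda\lambda'(\rho)}{\sigma^2},
\end{cases}
\]
collapses exactly to the system (\ref{pl31}), which proves both directions at once.

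There is no real obstacle here; the content of the lemma is simply the observation that a quadratic polynomial under $\lambda^2$ makes $\lambda\lambda'$ linear in $\rho$ and $(\lambda\lambda')'$ constant, so the biharmonic ODEs acquire the particularly clean coefficient structure displayed in (\ref{pl31}). The only point worth flagging in the write-up is that the hypothesis $\lambda^2(\rho)=A\rho^2+2C_0\rho+C>0$ is needed merely to ensure that $(N^2,d\rho^2+\lambda^2(\rho)d\phi^2)$ is a bona fide Riemannian surface on the range of $\rho$, so that Corollary \ref{Pc1} applies; no positivity enters the algebraic reduction itself.
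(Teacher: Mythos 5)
Your proposal is correct and follows exactly the paper's own argument: compute $\lambda\lambda'(\rho)=A\rho+C_0$ and $(\lambda\lambda'(\rho))'(\rho)=A$ from $\lambda^2(\rho)=A\rho^2+2C_0\rho+C$, then substitute into the system of Corollary \ref{Pc1}. Nothing is missing.
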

\begin{proof}
For $\lambda^2(\rho)=A\rho^2+2C_{0}\rho+C>0$, we have $\lambda\lambda'(\rho)=A\rho+C_{0}$ and $(\lambda\lambda'(\rho))'(\rho)=A$.
By Corollary \ref{Pc1} , the map $\varphi:(M^2,dr^2+\sigma^2(r)d\theta^2)\longrightarrow
(N^2,d\rho^2+\lambda^2(\rho)d\phi^2)$ with $\varphi(r,\theta)=(\rho(r), k \theta+a_{2})$ is biharmonic if and only if it solves the system
\begin{equation}\label{ppl31}
\begin{cases}
x''+\frac{\sigma'}{\sigma}x'-\frac{k^2A}{\sigma^2}x=0,\\
x=\tau^1=\rho''+\frac{\sigma'}{\sigma}\rho'-\frac{k^2(A\rho+C_0)}{\sigma^2}.
\end{cases}
\end{equation}
Thus, we obtain the lemma.
\end{proof}
\begin{theorem}\label{pb}
The rotationally symmetric map $\varphi:(S^2, dr^2+\sin^2(r) d\theta^2)\longrightarrow
(\r^2, d\rho^2+(\rho+1)d \phi^2)$ with $\varphi(r,\theta)=(\frac{1}{4}(\ln \tan \frac{r}{2})^2-\ln \sin r +1, \;\theta)$
is a proper biharmonic map.
\end{theorem}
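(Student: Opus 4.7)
The plan is to apply Lemma \ref{Lm10} directly. The target warping function $\lambda^2(\rho) = \rho + 1$ is of the quadratic form required by that lemma, with parameters $A = 0$, $C_0 = 1/2$, $C = 1$. The domain has $\sigma(r) = \sin r$, and the ansatz corresponds to $k = 1$, $a_2 = 0$ and $c = 0$. Because $A = 0$, the coefficient $k^2 A/\sigma^2$ vanishes and the first equation of (\ref{pl31}) degenerates to $x'' + \cot r \cdot x' = 0$, while the second reads $x = \tau^1 = \rho'' + \cot r \cdot \rho' - \frac{1}{2\sin^2 r}$. In particular, it suffices to show that the prescribed $\rho(r) = \tfrac{1}{4}(\ln \tan\tfrac{r}{2})^2 - \ln \sin r + 1$ makes $\tau^1$ equal to a nonzero constant, which then automatically satisfies the first equation (and the second tension field component from Lemma \ref{PLast} vanishes identically since $c = 0$).

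The key computational step uses the identity $\frac{d}{dr}\ln \tan(r/2) = \csc r$. Setting $u = \ln \tan(r/2)$, direct differentiation yields
\begin{equation}\notag
\rho' = \frac{u}{2\sin r} - \cot r, \qquad \rho'' = \frac{3}{2\sin^2 r} - \frac{u \cos r}{2\sin^2 r}.
\end{equation}
The term $\frac{u\cos r}{2\sin^2 r}$ in $\rho''$ is precisely cancelled by the corresponding piece of $\cot r \cdot \rho'$, and applying $\cot^2 r = \csc^2 r - 1$ gives $\rho'' + \cot r \cdot \rho' = \frac{1}{2\sin^2 r} + 1$. Hence $\tau^1 \equiv 1$, and the constant function $x \equiv 1$ trivially solves $x'' + \cot r \cdot x' = 0$. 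Therefore $\varphi$ is biharmonic by Lemma \ref{Lm10}, and since $\tau(\varphi) = \tau^1 \partial_\rho = \partial_\rho \not\equiv 0$, it is properly biharmonic.

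The main obstacle is purely computational: keeping track of the substitution $u = \ln \tan(r/2)$ and spotting the cancellation that makes $\tau^1$ exactly a nonzero constant. There is no conceptual difficulty, but the ansatz for $\rho(r)$ looks mysterious until one recognizes that it is engineered so that the two $u$-dependent terms in $\rho''$ and $\cot r \cdot \rho'$ annihilate, leaving $\tau^1$ constant, which is exactly the invariant that the degenerate $A = 0$ case of Lemma \ref{Lm10} demands.
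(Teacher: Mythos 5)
Your proposal is correct: the reduction to the system $x''+\cot r\,x'=0$, $x=\rho''+\cot r\,\rho'-\frac{1}{2\sin^2 r}$ via Lemma \ref{Lm10} with $A=0$, $C_0=\tfrac12$, $k=1$, $\sigma=\sin r$ is exactly the paper's starting point, and your computation $\rho''+\cot r\,\rho'=\frac{1}{2\sin^2 r}+1$, hence $\tau^1\equiv 1$, checks out. The only difference is direction: the paper integrates the degenerate system twice to derive the general solution (its ``Claim'') and then specializes the constants, whereas you differentiate the given $\rho$ and verify it directly --- same argument, run backwards.
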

\begin{proof} First, we prove the following\\
{\bf Claim:} Let $C_0, C $ be constants so that $\lambda^2(\rho)=2C_{0}\rho+C>0$. Then, the map $\varphi:(M^2,dr^2+\sigma^2(r)d\theta^2)\longrightarrow
(N^2,d\rho^2+\lambda^2(\rho)d\phi^2)$ with $\varphi(r,\theta)=(\rho(r), k\theta+a_{2})$
is biharmonic if and only if
\begin{eqnarray}\label{p11}
\rho(r)=\int\left\{\frac{\int\left(C_1\sigma(r)\int\frac{{\rm d}r}{\sigma(r)}+C_2\sigma(r)+\frac{k^2C_0}{\sigma(r)}\right){\rm d}r+C_3}{\sigma(r)}\right\}{\rm d}r+C_4,
\end{eqnarray}
where $C_{1},\,C_{2},\,C_{3}\,$ and $C_{4}\,$ are constants. \\
{\bf Proof of the Claim:} For $\lambda^2(\rho)=2C_{0}\rho+C$, we apply Lemma \ref{Lm10} with $A=0$ to conclude that the map $\varphi:(M^2,dr^2+\sigma^2(r)d\theta^2)\longrightarrow
(N^2,d\rho^2+\lambda^2(\rho)d\phi^2)$ with $\varphi(r,\theta)=(\rho(r), k\theta+a_{2})$
is biharmonic if and only if it solves the system
\begin{equation}\label{p31}
\begin{cases}
x''+\frac{\sigma'}{\sigma}x'=0,\\
x=\tau^1=\rho''+\frac{\sigma'}{\sigma}\rho'-\frac{k^2C_0}{\sigma^2}.
\end{cases}
\end{equation}
Integrating the first equation of (\ref{p31}) we obtain 
\begin{equation}\notag
x=C_1\int \frac{1}{\sigma(r)} dr+C_2.
\end{equation}
Substituting this into the second equation of (\ref{p31}) and multiplying $\sigma(r)$ to both sides of the resulting equation we have
\begin{equation}
(\rho'\sigma)'=C_1\sigma \int \frac{1}{\sigma(r)} dr+C_2\sigma +\frac{k^2C_0}{\sigma}.
\end{equation}

Integrating this second order ODE we obtain the Claim. \\

To prove the theorem, we first notice that the biharmonic maps given in the Claim are proper biharmonic maps for $C_1^2+C_2^2\ne 0$ since the first component of the tension field is $\tau^1=x=C_1\int \frac{1}{\sigma(r)} dr+C_2$. Now, we apply the Claim with $\sigma (r) =\sin r,\; \lambda^2(\rho)=\rho+1$ and $C=C_2=C_4=k=1, C_0=1/2, C_1=C_3=0$ to conclude that the rotationally symmetric map $\varphi:(S^2, dr^2+\sin^2(r) d\theta^2)\longrightarrow
(\r^2, d\rho^2+(\rho+1)d \phi^2)$ with $\varphi(r,\theta)=(\rho(r), \;\theta)$
is a proper biharmonic map if and only if 
\begin{eqnarray}\label{p12}
\rho(r)=\int\left\{\frac{\int\left(\sin r+\frac{1}{2\sin r}\right){\rm d}r}{\sin r}\right\}{\rm d}r+1.
\end{eqnarray}
A further integration of the above integral gives the required result. Thus, we complete the proof of the theorem.
\end{proof}
\begin{remark} 
We would like to point out that the proper biharmonic map\\ $\varphi:(S^2, dr^2+\sin^2(r) d\theta^2)\longrightarrow
(\r^2, d\rho^2+(\rho+1)d \phi^2)$ with\\ $\varphi(r,\theta)=(\frac{1}{4}(\ln \tan \frac{r}{2})^2-\ln \sin r +1, \;\theta)$ is actually defined on the sphere with two points (the north and the south poles) deleted since $r\ne 0, \pi$. 
\end{remark}

\begin{theorem}\label{mv}
Let $\lambda^2(\rho)=\rho^2+2C_0\rho+C>0$, and $ C_0,\; C$ be constants. Then, the map $\varphi:(S^2,dr^2+\sin^2(r)d\theta^2)\longrightarrow
(N^2,d\rho^2+\lambda^2(\rho)d\phi^2)$ with $\varphi(r,\theta)=(\rho(r), \theta)$ is biharmonic if and only if 
\begin{eqnarray}\notag
\rho(r)&=&(C_1-C_2+C_3)|\cot \frac{r}{2}|+(2C_1\ln|\tan \frac{r}{2}|+C_4)|\tan \frac{r}{2}|\\\label{GD26}&&-(C_1|\tan \frac{r}{2}|+C_2|\cot \frac{r}{2}|)\ln (1+\tan^2 \frac{r}{2})-C_0,
\end{eqnarray}
where $ C_1,\;C_2,\;C_3,\;C_4$ are constants. Furthermore, when $C_1^2+C_2^2\neq0$, the rotationally symmetric maps determined by (\ref{GD26}) are proper biharmonic maps.
\end{theorem}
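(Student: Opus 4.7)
The plan is to apply Lemma~\ref{Lm10} with $\sigma(r)=\sin r$, $A=1$ and $k=1$ (since $\varphi(r,\theta)=(\rho(r),\theta)$ corresponds to $k=1,\,a_2=0$ in the lemma's notation) to reduce biharmonicity of $\varphi$ to the coupled system
\begin{equation*}
x''+\cot r\cdot x'-\tfrac{1}{\sin^2 r}\,x=0,\qquad x=\rho''+\cot r\cdot\rho'-\tfrac{\rho+C_0}{\sin^2 r}.
\end{equation*}
I would then set $u=\rho+C_0$ so that, writing $L[f]:=f''+\cot r\cdot f'-f/\sin^2 r$, the system reads $L[x]=0$ and $L[u]=x$. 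Introducing the substitution $t=\ln|\tan(r/2)|$ already exploited in Corollary~\ref{Ps1}, a direct computation shows $L[f]=(1/\sin^2 r)(f_{tt}-f)$, so the system is equivalent to $x_{tt}-x=0$ and $u_{tt}-u=\sin^2 r\cdot x$, where $\sin^2 r=4e^{2t}/(1+e^{2t})^2$.

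The first equation immediately yields $x=Ae^{t}+Be^{-t}$ for constants $A,B$, and the second equation then becomes the linear constant-coefficient ODE
\begin{equation*}
u_{tt}-u=\frac{4Ae^{3t}+4Be^{t}}{(1+e^{2t})^2},
\end{equation*}
which I would solve by variation of parameters against the fundamental system $\{e^{t},e^{-t}\}$ (Wronskian $-2$). The essential antiderivatives are the elementary
\begin{equation*}
\int\frac{e^{2t}\,dt}{(1+e^{2t})^2}=-\frac{1}{2(1+e^{2t})},\qquad \int\frac{dt}{1+e^{2t}}=t-\tfrac{1}{2}\ln(1+e^{2t}),
\end{equation*}
the latter producing precisely the resonant $te^{t}$ contribution visible in~(\ref{GD26}) and the former, together with $\int\frac{e^{4t}\,dt}{(1+e^{2t})^2}$ (computed via $e^{4t}=(1+e^{2t})^2-2(1+e^{2t})+1$), generating the $\ln(1+e^{2t})$ factors. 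After combining and using the simplification $(e^{t}+e^{-t})/(1+e^{2t})=e^{-t}$, the general solution takes the form
\begin{equation*}
u=\tilde\beta\,e^{-t}+\bigl(\tilde\alpha+2Bt\bigr)e^{t}-\bigl(Be^{t}+Ae^{-t}\bigr)\ln(1+e^{2t}),
\end{equation*}
with $\tilde\alpha,\tilde\beta$ absorbing the constants of integration.

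Renaming $A=C_2$, $B=C_1$, $\tilde\alpha=C_4$, $\tilde\beta=C_1-C_2+C_3$ and reverting via $e^{t}=|\tan(r/2)|$, $e^{-t}=|\cot(r/2)|$, $1+e^{2t}=1+\tan^2(r/2)$, and $\rho=u-C_0$, delivers precisely the expression (\ref{GD26}). For the final assertion about proper biharmonicity when $C_1^{\,2}+C_2^{\,2}\neq 0$, Lemma~\ref{PLast} with $c=0,\,k=1$ gives $\tau^2\equiv 0$ automatically, while $\tau^1=x=C_2|\tan(r/2)|+C_1|\cot(r/2)|$ is not identically zero on $(0,\pi)$ under the hypothesis, so $\tau(\varphi)\not\equiv 0$ and $\varphi$ is biharmonic but not harmonic. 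The chief obstacle is the bookkeeping in the variation-of-parameters integrals of Step~2 and, once the solution is assembled, redistributing the four free constants so that the result appears in the exact labelled form~(\ref{GD26}); everything else is a direct application of results already established earlier in the paper.
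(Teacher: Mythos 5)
Your proposal is correct and follows essentially the same route as the paper: reduce to the system of Lemma \ref{Lm10} with $\sigma=\sin r$, $A=k=1$, pass to $t=\ln|\tan\frac r2|$ to get constant-coefficient equations, solve $x_{tt}-x=0$, and then apply variation of parameters; your integrals and the final reassembly of constants match the paper's computation. The only (welcome) difference is cosmetic — you absorb $C_0$ via $u=\rho+C_0$ and you explicitly justify the ``proper biharmonic when $C_1^2+C_2^2\neq0$'' claim via $\tau^2\equiv0$ and $\tau^1=x$, which the paper leaves implicit.
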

\begin{proof}
Using Lemma \ref{Lm10} with $\sigma(r)=\sin r, A=1, k=1,\; {\rm and}\; a_2=0$ we conclude that $\varphi$ is biharmonic if and only if it solves the system
\begin{equation}\label{mv1}
\begin{cases}
x''+\cot rx'-\frac{1}{\sin^2r}x=0,\\
x=\tau^1=\rho''+\cot r\rho'-\frac{1}{\sin^2r}\rho-\frac{C_0}{\sin^2r}.
\end{cases}
\end{equation}
To solve this system of ODEs we introduce new variable by letting $t=\ln|\tan \frac{r}{2}|$. It follows that 
\begin{equation}\label{GD11}
\begin{cases}
\rho' =\frac{d\rho}{d r}=\frac{d\rho}{d t}\frac{d t}{d r}=\frac{1}{\sin r}\frac{d\rho}{d t},\\
\rho'' =\frac{d^2\rho}{d r^2}=\frac{1}{\sin^2 r}\frac{d^2\rho}{d t^2}-\frac{\cos r}{\sin^2 r}\frac{d\rho}{d t},\\
x' =\frac{dx}{d r}=\frac{1}{\sin r}\frac{dx}{d t},\\
x'' =\frac{d^x}{d r^2}=\frac{1}{\sin^2 r}\frac{d^2x}{d t^2}-\frac{\cos r}{\sin^2 r}\frac{dx}{d t}.
\end{cases}
\end{equation}
Substituting $\sin r=\frac{2\tan \frac{r}{2}}{1+\tan^2 \frac{r}{2}}=\frac{2e^t}{1+e^{2t}}$ and Equation (\ref{GD11}) into (\ref{mv1}) we have 
\begin{equation}\label{GD12}
\begin{cases}
\frac{d^2x}{d t^2}- x=0,\\
x=\frac{(1+e^{2t})^2}{4e^{2t}}\left(\frac{d^2\rho}{d t^2}-\rho-C_0\right).
\end{cases}
\end{equation}
It is very easy to see that the general solution of the first equation of (\ref{GD12}) as
\begin{equation}
x=C_1e^{-t}+C_2e^{t}.
\end{equation}
Substituting this into the second equation of (\ref{GD12}) we obtain
\begin{equation}\label{GD10}
\frac{d^2\rho}{d t^2}-\rho=\frac{4e^{2t}}{(1+e^{2t})^2}(C_1e^{-t}+C_2e^{t})+C_0.
\end{equation}
Using the method of variation of parameters we can have the general solution of (\ref{GD10}) as
\begin{eqnarray}\label{VP}
\rho(t)=C_3e^{-t}+C_4e^{t}+u_1(t)e^{-t}+u_2(t)e^{t}
\end{eqnarray}
where the parameters $u_1, u_2$ are determined by
\begin{eqnarray}\notag
u'_1(t) &=&-\frac{e^{t}(\frac{4e^{2t}}{(1+e^{2t})^2}(C_1e^{-t}+C_2e^{t})+C_0)}{2},\\
&=&-C_1\frac{2e^{2t}}{(1+e^{2t})^2}- C_2\frac{2e^{4t}}{(1+e^{2t})^2}-\frac{C_0}{2}e^t,\\\notag
u'_2(t)&=&\frac{e^{-t}(\frac{4e^{2t}}{(1+e^{2t})^2}(C_1e^{-t}+C_2e^{t})+C_0)}{2}\\
&=&\frac{2C_1}{(1+e^{2t})^2}+ C_2\frac{2e^{2t}}{(1+e^{2t})^2}+\frac{C_0}{2}e^{-t}.
\end{eqnarray}
Integrating these first order ODEs we obtain
\begin{eqnarray}
u_1(t) &=&\frac{C_1}{1+e^{2t}}- \frac{C_2}{1+e^{2t}}-C_2\ln (1+e^{2t})-\frac{C_0}{2}e^t,\\
u_2(t) &=&2C_1t+\frac{C_1}{1+e^{2t}} -C_1\ln (1+e^{2t}) - \frac{C_2}{1+e^{2t}}-\frac{C_0}{2}e^{-t}.
\end{eqnarray}
Substituting these into (\ref{VP}) we obtain the general solution of (\ref{GD10}) as
\begin{equation}
\rho(t)=(C_1-C_2+C_3)e^{-t}+(2C_1t+C_4)e^{t}-(C_1e^t+C_2e^{-t})\ln (1+e^{2t})-C_0.
\end{equation}
Noting that $t=\ln|\tan \frac{r}{2}|$ we have
\begin{eqnarray}\notag
\rho(r)&=&(C_1-C_2+C_3)|\cot \frac{r}{2}|+(2C_1\ln|\tan \frac{r}{2}|+C_4)|\tan \frac{r}{2}|\\&&-(C_1|\tan \frac{r}{2}|+C_2|\cot \frac{r}{2}|)\ln (1+\tan^2 \frac{r}{2})-C_0,
\end{eqnarray}
where $C_1, C_2, C_3, C_4$ are constants. This completes the proof of the theorem.
\end{proof}
\begin{example}
The map $\varphi:(S^2,dr^2+\sin^2(r)d\theta^2)\longrightarrow
(N^2,d\rho^2+(\rho^2+2C_0\rho+C)d\phi^2)$ with $ C>0$ and $\varphi(r,\theta)=(|\cot \frac{r}{2}|[1+\ln (1+\tan^2 \frac{r}{2})], \theta)$ is a proper biharmonic map. This is obtained from Theorem \ref{mv} with $C_0=C_1=C_3=C_4=0, C_2=-1$ and hence (\ref{GD26}) becomes $\rho(r)=|\cot \frac{r}{2}|[1+\ln (1+\tan^2 \frac{r}{2})]$. 
\end{example}
\begin{remark}
(i) Note that the stereographic projections $\phi: (S^2\setminus\{N\},  dr^2+\sin^2(r)d\theta^2)\longrightarrow
(r^2,d\rho^2+\rho^2d\phi^2)$ with $\phi (r, \theta)=(\cot \frac{r}{2}, \theta)$ and $\phi: (S^2\setminus\{S\},  dr^2+\sin^2(r)d\theta^2)\longrightarrow
(r^2,d\rho^2+\rho^2d\phi^2)$ with $\phi (r, \theta)=(\tan \frac{r}{2}, \theta)$ are among the maps in the family provided by Theorem \ref{mv}. It is well known that these are harmonic maps.\\
(ii) We notice that the solution does not depend on $C$ in the prescribed metric $d\rho^2+(\rho^2+2C_0\rho+C) d\phi^2$ on the target manifold. One can easily check that the Gauss curvature of the metric $d\rho^2+(\rho^2+2C_0\rho+C) d\phi^2$ is given by $K= \frac {C_0^2-C}{(\rho^2+2C_0\rho+C)^2}$. This allows us to construct examples of local proper biharmonic maps from a $2$-sphere into a surface with curvature of any fixed sign by a suitable choice of $C$. \\
(iii) Note that none of the locally defined proper biharmonic maps from $S^2$ given in Theorem \ref{mv} can be extended to a global map $\varphi:(S^2,dr^2+\sin^2(r)d\theta^2)\longrightarrow
(N^2, d\rho^2+(\rho^2+2C_0\rho+C) d\phi^2)$. If otherwise, we could choose $ C$ so that $C>C_0^2$ and hence the Gauss curvature of the target surface would be negative as we mentioned in (i). This would  contradict a theorem of  Jiang stating that any biharmonic map from a compact manifold into a non-positively curved manifold has to be harmonic.
\end{remark}
As a closing remark, we would like to point out that our results (Theorems \ref{ow111}, \ref{pb}, \ref{mv}, Corollary \ref{S2}, and Remarks 4, 5, 6) seem to suggest the following\\

{\em Conjecture:} any biharmonic map $S^2\longrightarrow (N^n,h)$ is a  weakly conformal immersion.

\end{document}